\numberwithin{equation}{section}
\theoremstyle{plain}
\newtheorem{theorem}{Theorem}[section]
\newtheorem{lemma}[theorem]{Lemma}
\newtheorem{proposition}[theorem]{Proposition}
\theoremstyle{definition}
\newtheorem{definition}[theorem]{Definition}
\theoremstyle{remark}
\newtheorem{remark}[theorem]{Remark}
\numberwithin{equation}{section}
\newlist{Properties}{enumerate}{2}
\setlist[Properties]{label=\textbf{R.\arabic*},itemindent=*}
\def\bb #1{ {\mathbb #1} }
\newcommand{\1}{\mathbbm{1}}
\def\ds{\displaystyle}
\DeclareMathOperator{\rank}{rank}
\DeclareMathOperator{\SPAN}{span}
\begin{document}


\date{today}


\begin{frontmatter}

\title{Bounds on sizes of generalized caps in $AG(n,q)$ via the \\Croot-Lev-Pach polynomial method}

\author{Michael Bennett}

\address{111 Harold Ave., Cornwall, NY 12518 \\ mike.b.bennett87@gmail.com}

\begin{abstract} 

In 2016, Ellenberg and Gijswijt employed a method of Croot, Lev, and Pach to show that a maximal cap in $AG(n,q)$ has size $O(q^{cn})$ for some $c < 1$. In this paper, we show more generally that if $S$ is a subset of $AG(n,q)$ containing no $m$ points on any $(m-2)$-flat, then $|S| < q^{c_mn}$ for some $c_m < 1$, as long as $q$ is odd or $m$ is even.

\end{abstract}

\begin{keyword}
polynomial method \sep affine caps \sep function rank \sep finite geometry

\end{keyword}

\end{frontmatter}


\section{Introduction and main theorem}

Let $q$ be a power of a prime. A cap is a set of points in the projective geometry $PG(n,q)$, no three of which lie on a common line. A cap $A$ is maximal if for any other cap $B$, $|B| \le |A|$, and we denote the size of a maximal cap in $PG(n,q)$ by $m_2(n,q)$. Caps may be similarly defined in the affine space $AG(n,q)$. The problem of finding maximal caps has been studied extensively in both types of spaces (see for instance, \cite{Pot} or \cite{Thas}). One of the primary motivations behind the study of caps is their application to coding theory. See, for instance, section 17.2 of \cite{bierbook} for a detailed explanation of the connection between caps and linear codes.

One question that arises in the investigation of maximal caps is how they grow with $n$. In particular, we would like to estimate
$$
\mu(q) = \limsup_{n \to \infty} \frac{\log_q\left(m_2(n,q)\right)}{n}.
$$
While we will be working exclusively in affine space in this paper, note that if $A$ is a maximal cap in $AG(n,q)$, then a maximal cap in $PG(n,q)$ has at most $|A|(1 + o(1))$ points. Therefore, any bounds on $\mu(q)$ apply to both affine and projective space. Trivially, we have $\mu(q) \le 1$, and a lower bound of $2/3$ can be achieved quite easily: it is well know that a maximal cap $P \subset AG(3,q)$ has $q^2$ points (see, for instance, \cite{EFLS}). Then $P^k \subset AG(3k,q)$ is a cap of $q^{2k}$ points.

Recently, the problem of finding better estimates for $\mu(3)$ has been of great interest. It was long suspected that $\mu(3) < 1$, but it took some time to find an appropriate method of attack. In 1985, Meshulam (\cite{Mesh}) proved that the maximum size of a cap in $AG(n,3)$ (sometimes called a ``cap set") is $\frac{2}{n} \cdot 3^n$ using Fourier techniques. In 2011, Bateman and Katz  (\cite{BK}) combined these Fourier techniques with spectral methods to improve this bound to $\ds O \left( 3^n/n^{1 + \epsilon}\right)$, where $\epsilon$ is independent of $n$. It was not until 2016 that Ellenberg and Gijswijt  (\cite{ElGi}) used a polynomial method developed by Croot, Lev, and Pach (\cite{CLP}) to get $m_2(n,3) = O(2.756^{n})= O(3^{0.923n})$, and hence $\mu(3) <1$. In fact, they show that $\mu(q) <1$ for all $q$. This rendered the problem essentially solved; however, the bounds they achieve are not known to be sharp. The best known lower bound in the $q=3$ case is $\mu(3) > 0.724$, due to Edel (\cite{Edel}). 

In 2001, Hirschfeld and Storme collected many of the best known bounds on maximal caps in $PG(n,q)$ at the time. While they are nontrivial, one can see in \cite{Hirsch} that the best upper and lower bounds listed in tables 4.4(i), 4.4(ii), 4.6(i), 4.6(ii), and 4.6(iii) do not improve on the trivial bounds mentioned above: $2/3 \le \mu(q) \le 1$. Even the more recent results from \cite{ElGi} don't close the gap significantly; it seems that there is more work to be done before we have a good understanding of maximal caps in higher dimensions. It is also important to mention that the upper bounds from \cite{ElGi} are only competitive with the trivial $m_2(n,q) < O(q^{n-1})$ when $n$ is much larger than $\log(q)$.

In this paper, we will be looking at a generalization of caps. Rather than just restricting the number of points on lines, we can restrict the number of points on $k$-dimensional affine subspaces of $AG(n,q)$, known as $k$-flats.

\begin{definition}
Suppose $3 \le m \le n+2$ and $A \subset AG(n,q)$ has size greater than $m$. Then $A$ is \textbf{\boldmath $m$-general} if no $m$ points of $A$ lie on a single $(m-2)$-flat. Equivalently, $A$ is $m$-general if any $m$-point subset of $A$ is in general position.
\end{definition}

\noindent
Note that a cap is the same as a $3$-general set. In the language of \cite{Hirsch}, an $m$-general set $A$ is essentially the same as an $(|A|,m-1)$-set, though by our definition, any $m$-general set is also $k$-general for $3 \le k\le m$. If the maximum size of an $m$-general set is $M_{m-1}(n,q)$ (the notation used in \cite{Hirsch}), let
$$
\mu_m(q) = \limsup_{n \to \infty} \frac{\log_q\left(M_{m-1}(n,q)\right)}{n}.
$$
Trivially, we have $\frac{1}{m-1} \le \mu_m(q) \le 1$. The lower bound is due to the following observation:
Suppose $A$ is an $m$-general set in $AG(n,q)$. Then there are precisely $\binom{|A|}{m-1}$ distinct $(m-2)$-flats each containing $m-1$ points of $A$. The union of these flats covers at most $q^{m-2}\binom{|A|}{m-1}$ points of $AG(n,q)$. So as long as $q^{m-2}\binom{|A|}{m-1} < q^n$, there are other points that can be added to $A$ to create a larger $m$-general set. Solving for $|A|$ gives the result.

A recent paper of Huang, Tait, and Won (\cite{HTW}) uses a clever combinatorial argument to prove that $\mu_4(3) = 0.5$. For $m \ge 4$, an $m$-general set must be $4$-general, and thus an immediate consequence is that $\mu_m(3) \le 0.5$. The upper bounds we recover for $\mu_m(3)$ in this paper are not as good as 0.5 when $4 \le m \le 9$ (see Table \ref{tableb}).

\begin{theorem}\label{main}
Let $n$ be a positive integer, $q$ a power of a prime $p$, and $m$ an integer such that $3 \leq m \leq n+2$. Suppose also that $q$ is odd, or $m$ and $q$ are both even. Then there is an $\alpha$ between $0$ and $1$ so that
$$
M_{m-1}(n,q) <  2m + m \cdot \min_{t \in (0,1)}\left(t^{-\frac{q-1}{m}} \cdot \frac{1-t^q}{1-t}\right)^n \le 2m + m \cdot\left(\frac{me^{1-\frac{\alpha}{m}}}{m^2-\alpha m+\alpha}q + C \right)^n,
$$
where $C$ depends only on $m$.
\end{theorem}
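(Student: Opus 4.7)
The plan is to generalize the Croot--Lev--Pach / Ellenberg--Gijswijt polynomial-method argument from caps ($m=3$) to $m$-general sets, replacing collinearity with an $m$-ary affine-dependence relation.

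\textbf{Setup.} First I would pick coefficients $\lambda_1, \ldots, \lambda_m \in \mathbb{F}_q$, mostly nonzero, with $\sum_i \lambda_i = 0$; the parity hypothesis of the theorem is precisely what guarantees such a tuple exists. For example, when $q$ is odd a candidate is $(1,\ldots,1,-(m-1))$ (tweaked if $p\mid m-1$), and when $q$ and $m$ are both even a valid tuple in $\mathbb{F}_q^*$ can be found by pigeonholing on $\mathbb{F}_q^*$; when $q$ is even and $m$ is odd no such tuple exists (e.g.\ in $\mathbb{F}_2^*$), which is exactly the excluded case. With the $\lambda_i$ fixed I would define
\[
T(x_1,\ldots,x_m) \;=\; \mathbbm{1}_0\!\Big(\textstyle\sum_i \lambda_i x_i\Big) \;=\; \prod_{j=1}^n\Big(1-\big(\textstyle\sum_i \lambda_i x_i^{(j)}\big)^{q-1}\Big)
\]
on $A^m$, a polynomial of total degree $(q-1)n$ in the $mn$ coordinates.

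\textbf{Slice-rank upper bound.} Next, expand $T$ into monomials and bucket each monomial by the variable of minimum individual degree; that minimum must be at most $d:=\lfloor(q-1)n/m\rfloor$. This expresses $T$ as a sum of $m$ slices of the form $g(x_i)\,h(x_1,\ldots,\widehat{x_i},\ldots,x_m)$ with $\deg_{x_i}g\le d$, giving $\mathrm{slicerank}(T) \le m\cdot M_d$, where $M_d$ counts multi-indices in $\{0,\ldots,q-1\}^n$ of weight at most $d$.

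\textbf{Diagonal lower bound via $m$-generality.} Because $\sum_i \lambda_i=0$, any $m$-tuple $(x_1,\ldots,x_m)\in A^m$ with distinct coordinates and $\sum_i\lambda_i x_i=0$ would be a nontrivial affine dependence, placing the $x_i$'s on a common $(m-2)$-flat---forbidden by $m$-generality. Hence the support of $T$ consists of the main diagonal $\{(y,\ldots,y):y\in A\}$ (on which $T=1$) together with ``partial diagonals'' associated to proper subsets $S\subsetneq[m]$ with $\sum_{i\in S}\lambda_i=0$. Applying $k$-generality ($3\le k<m$) to the distinct values arising on each such piece, I would bound the slice-rank contribution of each partial diagonal by a constant independent of $n$, totalling at most $2m$. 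Combined with Tao's diagonal slice-rank lemma, this gives $|A| \le \mathrm{slicerank}(T) + 2m \le m\cdot M_d + 2m$.

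\textbf{Generating functions and the main obstacle.} The standard estimate $M_d \le t^{-d}\big((1-t^q)/(1-t)\big)^n$ for any $0<t<1$, applied with $d=(q-1)n/m$, yields the first inequality of the theorem after optimising over $t$. The second inequality is a calculus exercise: set $t=1-\alpha/q$ for an appropriate $\alpha\in(0,1)$ and expand in $q$. The most delicate step throughout is the previous one---matching the parity hypothesis to a workable coefficient choice in $\mathbb{F}_q$, and then controlling the partial-diagonal slice-rank contributions by a constant once proper subset sums of the $\lambda_i$'s are forced to vanish (which happens already for many cases within the theorem's scope, such as $m=4$, $q=3$)---and this is exactly where the additive $2m$ correction in the statement originates.
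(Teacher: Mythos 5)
Your approach is genuinely different from the paper's: you replace the paper's polynomial $G_m^S$ (a sum over $(t_1,\ldots,t_{m-1})\in\mathbb{F}_q^{m-1}$ that evaluates to the indicator of affine independence) with the much simpler $\mathbb{1}[\sum_i\lambda_i x_i=0]$, and you replace the paper's inductive lower bound on $\rank(T_m^S)$ where $T_m^S$ is the ``all distinct'' indicator (their Lemma 4.1, a delicate two-step induction in $m$) with a direct appeal to Tao's diagonal slice-rank lemma. This is an appealing simplification, and indeed for $m=3$ it works cleanly: every proper nonempty subset of $\{\lambda_1,\lambda_2,\lambda_3\}$ is a singleton, so nonvanishing of the $\lambda_i$ forces $T|_{A^3}$ to be exactly the diagonal indicator, and you recover $|A|\le 3\cdot M_d$ with no additive correction at all. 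Your polynomial also behaves differently from the paper's at the boundary cases (for $q=4$, $m=3$ the tuple $(1,\omega,1+\omega)$ exists and has no vanishing proper subset sums, so your hypothesis identification with the theorem's parity condition is actually off), but that is a secondary matter.

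The real gap is in the claim that the partial diagonals contribute only a bounded amount to the slice rank. They do not. Take $q=3$, $m=4$: the all-nonzero constraint forces $\lambda=(1,1,2,2)$ up to permutation, and then $\lambda_1+\lambda_3=0$ and $\lambda_2+\lambda_4=0$, so on a $4$-general set $A$ your $T$ equals $1$ on the entire set $\{(x,y,x,y):x,y\in A\}$ (and on $\{(x,y,y,x)\}$). The corresponding indicator $\delta(x_1,x_3)\delta(x_2,x_4)$ has slice rank exactly $|A|$, not $O(1)$: grouping $(x_1,x_2)$ and $(x_3,x_4)$ into two ``super-variables'' turns any slice of the four-variable function into a two-variable function of matrix rank at most $|A|$, while the merged function is the $|A|^2\times|A|^2$ identity, of rank $|A|^2$, giving slice rank at least $|A|$. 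So you cannot write $T|_{A^m}$ as ``diagonal plus a rank-$O(m)$ error'' and invoke subadditivity; the error terms are of the same order as the main term, and it is not at all clear how to extract $\rank(T|_{A^m})\ge|A|-O(m)$ from Tao's lemma in this situation. This is precisely the difficulty the paper sidesteps by working with the symmetric function $T_m^S$ (supported on all distinct tuples rather than on diagonals) and proving the lower bound $\rank(T_m^S)\ge|S|-2m+3$ by a bespoke inductive argument rather than by reduction to the diagonal lemma; and it is also where the parity hypothesis enters for them, since $\rank(T_{2k+1}^S)\le 2k+1$ in characteristic $2$. As written your argument establishes the theorem only in the cases where no proper subset sum of the $\lambda_i$ vanishes, which excludes infinitely many $(q,m)$ pairs within the theorem's scope.
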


In particular, this tells us that
\begin{equation}\label{mubound}
\mu_m(q) \le 1 - \log_q\left(\frac{m^2-\alpha m+\alpha}{me^{1-\frac{\alpha}{m}}}\right)+O\big ((q\log q)^{-1}\big).
\end{equation}
The value of $\alpha$, which we will show how to estimate in lemma \ref{asymptotic}, depends on $m$.

The restriction $m \leq n+2$ makes sense, as in the space $AG(n,q)$, it is not possible to have $n+2$ points in general position. On the other hand, the omission of the case where $q$ is even and $m$ is odd is not founded on any geometric principles; it is merely an artifact of the methodology we will see here. It is very possible that a similar result holds for this case using a slightly different approach.

\begin{remark} In \cite{ElGi}, the authors obtain upper bounds on the sizes of subsets of $AG(n,q)$ with no \textit{arithmetic progressions} along lines. In this paper, in the $m=3$ case, we simply require that no three points be on a line. Despite this stricter imposition, the bounds we achieve in the $m=3$ case do not improve over those in \cite{ElGi}. It may be possible to refine the methods in this paper to improve the bound, but it would require a more careful investigation of the polynomial $G$ (see equation \eqref{g}).
\end{remark}


\section{Rank of a function}
Our result relies heavily on the method of Croot, Lev, Pach, Ellenberg, and Gijswijt as outlined by Tao in \cite{Tao}. Tao introduces the ``rank" of a function, which has a close connection with matrix rank:

\begin{definition} The function $F:A^k \to X$ is said to have \textbf{rank} $r$ ($\rank(F) = r$) if $r$ is the smallest integer that allows us to write
$$
F(x_1,x_2, \ldots, x_k) = \sum_{n = 1}^r f_n(x_{m_n})g_n(x_1, \ldots, x_{m_n-1}, x_{m_n+1},\ldots, x_k).
$$
for some $m_n \in \{1, 2, \ldots, k\}$ and functions
$$
f_n: A \to X \quad\quad\quad g_n:A^{k-1} \to X.
$$
\end{definition} For instance, if $F:\bb{R}^2 \to \bb{R}$, where
$$
F(x,y) = x^2y + xy^2+ 2x+y^2 + y + 2,
$$
then $F$ has rank 2, since $F(x,y)$ can be written as $(x^2+1)y + (x+1)(y^2+2)$ but cannot be written in the form $f(x)g(y)$. We will occasionally abuse notation and write, for instance,\\ ``$\rank(x^2y + xy^2+ 2x+y^2 + y + 2) = 2$" when we mean ``$\rank(F) = 2$."

It is important to note here that the rank of a function depends on the number of variables $F$ takes. If $F$ is a function of $k$ variables, but only $k-1$ of them appear in the definition of $F$, then the rank of $F$ is 1 (or 0 if $F$ is identically 0). For instance,
$$
F(x,y,z) = x^2y + xy^2+ 2x+y^2 + y + 2
$$
is a rank 1 function, since $F(x,y,z) = f(z)g(x,y)$, where $f(z) = 1$ and $g(x,y) = F(x,y,0)$. When clarity is needed, we will say that the \boldmath ${k}$-\textbf{rank} \unboldmath  of $F$ is $r$ ($\rank_k(F) = r$) to stress that its rank, as a function of $k$ variables, is $r$.

Before looking at some properties of rank, we introduce a useful bit of notation:
\begin{definition}\label{vrow}
Let $A = \{a_1, \ldots, a_{|A|}\}$ be a finite set and $f$ a function on $A$. We define \boldmath $v_{\text{row}}(f),v_{\text{col}}(f)$ \unboldmath to be the $|A|$-dimensional row and column vectors with $f(a_i)$ in the $i^\text{th}$ position.
\end{definition}

\pagebreak

\begin{proposition}

Let $A = \{a_1, \ldots, a_{|A|}\}$ be a finite set, $X$ a field, and $\mathcal{F}_k$ the vector space over $X$ of $k$-variable functions $f:A^k \to X$. Let $F,G \in \mathcal{F}_k$. Then the following properties hold:

\begin{Properties}[label=\textbf{R.\arabic*}]

\item \label{R1} $\ds \rank_k(F+G) \leq \rank_k(F) + \rank_k(G)$.

\item \label{R2} If $B \subset A$, then $\rank_k \left(F\big|_B \right) \leq \rank_k(F)$.

\item \label{R3} $\rank_k(F) \leq |A|$.

\item \label{R4} If $H \in \mathcal{F}_2$ and $M$ is the $|A| \times |A|$ matrix with $m_{ij} = H(a_i,a_j)$, then $\rank_2(H) \geq \rank(M)$.

\end{Properties}
For properties \ref{R5}, \ref{R6}, \ref{R7}, $f_n \in \mathcal{F}_1$, $g_n \in \mathcal{F}_k$, and the function $h\in \mathcal{F}_{k+1}$ is defined by $$h(x,y_1,y_2 , \ldots, y_{k}) = \sum_{n = 1}^r f_n(x)g_n(y_1,\ldots,y_k).$$
\begin{Properties}[resume]

\item \label{R5} If $\{\tilde{f}_n: 1 \leq n \le r\} \subset \mathcal{F}_1$ so that $\{f_n: 1 \leq n \leq r\} \subset \SPAN\{\tilde{f}_n: 1 \leq n \leq \tilde{r}\}$, 
then there exists $\{\tilde{g}_n: 1 \le n \le \tilde{r}\} \subset \mathcal{F}_k$ so that
$$
h(x,y_1, \ldots, y_k) = \sum_{n = 1}^{\tilde{r}} \tilde{f}_n(x)\tilde{g}_n(y_1,\ldots,y_k).
$$

\item \label{R6}  Let $M$ be the $|A| \times r$ matrix whose columns are $v_\text{col}(f_n)$. Then $
\rank_{k+1}(h) \leq \rank(M)$.

\item \label{R7} If $\ds \rank_{k+1}(h) = r$,
then the $f_n$ are linearly independent in $\mathcal{F}_1$.

\end{Properties}

\end{proposition}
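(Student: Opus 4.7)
The plan is to prove the seven properties in order; most are routine manipulations of the definition and only the later ones need any real linear algebra.

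For \ref{R1}, I would simply concatenate rank-achieving decompositions of $F$ and $G$ into one decomposition of $F+G$ whose length is the sum of the two ranks. Property \ref{R2} is immediate from restricting each $f_n$ and $g_n$ in a decomposition of $F$ down to $B$. For \ref{R3}, I would expand $F$ along its first variable via indicator functions, writing
\[
F(x_1, \ldots, x_k) = \sum_{a \in A} \1_{\{x_1 = a\}}(x_1)\, F(a, x_2, \ldots, x_k),
\]
which exhibits $F$ as a sum of $|A|$ rank-$1$ products.

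The substantive geometric step is \ref{R4}. In any rank-$r$ decomposition of the two-variable function $H$, each summand is either of the form $f_n(x_1) g_n(x_2)$ or $f_n(x_2) g_n(x_1)$; in either case the corresponding contribution to $M$ is the outer product of two $|A|$-vectors, hence a rank-$1$ matrix. Summing over $n$ expresses $M$ as a sum of $r$ rank-$1$ matrices, and subadditivity of matrix rank gives $\rank(M) \leq r = \rank_2(H)$.

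For \ref{R5}, I would expand each $f_n = \sum_m c_{n,m}\tilde{f}_m$ inside the given sum and interchange the order of summation, defining $\tilde g_m = \sum_n c_{n,m} g_n$. Property \ref{R6} is then almost immediate: choose a basis $\tilde{f}_1, \ldots, \tilde{f}_s$ of the column space of $M$, viewed as functions on $A$, where $s = \rank(M)$; every $f_n$ lies in $\SPAN\{\tilde f_m\}$, and \ref{R5} delivers a decomposition of $h$ of length $s$. Finally, \ref{R7} I would prove by contradiction: a linear dependence among the $f_n$ lets me eliminate one of them and absorb it into the remaining $g_n$'s via exactly the rearrangement of \ref{R5}, producing a decomposition of $h$ of length strictly less than $r$ and contradicting $\rank_{k+1}(h) = r$. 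The only delicate bookkeeping I foresee is tracking the two possible variable slots in \ref{R4}, but once the two cases are noted the rest is mechanical.
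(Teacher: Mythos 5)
Your proposal is correct and mirrors the paper's own proof essentially line for line: R1--R2 by concatenation and restriction of decompositions, R3 via the indicator-function expansion along the first variable, R4 by expressing the matrix as a sum of $r$ outer products, R5 by re-expressing the $f_n$ in the spanning family and collecting coefficients into new $\tilde g_n$, and R6--R7 as immediate corollaries of R5. The one spot where you are slightly more explicit than the paper is R4, where you note that a summand may attach to either variable slot ($m_n \in \{1,2\}$); the paper tacitly normalizes to $\sum f_n(x)g_n(y)$, which is harmless since for $k=2$ the two forms differ only by commuting the factors, so this is a difference of exposition rather than substance.
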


\begin{proof}

Properties \ref{R1} and \ref{R2} are trivial.

\ref{R3}: Let $\delta_a$ be the function on $A$ which is $1$ at $a$ and $0$ otherwise. Then
$$
F(x_1,x_2, \ldots, x_k) = \sum_{a \in A} \delta_a(x_1)F(a,x_2,x_3,\ldots, x_k).
$$

\ref{R4}: Suppose $H$ has rank $r$. Then $\ds H(x,y) = \sum_{n = 1}^{r} f_n(x)g_n(y)$
for functions $f_n,g_n: A \to X$. For each $n$, let $M_n$ be the $|A| \times |A|$ matrix $v_\text{col}(f_n) v_\text{row}(g_n)$. Since each $M_n$ has rank at most 1, $M = \sum_{n = 1}^r M_n$ is a matrix of rank at most $r$.

\vspace{.1 in}
\ref{R5}: For each fixed choice of $(y_1,\ldots, y_k) \in A^k$, elementary linear algebra tells us there are elements  $s_n(y_1,\ldots,y_{k}) \in X$ for $1 \leq n \leq \tilde{r}$
so that
$$
\sum_{n = 1}^r g_n(y_1,\ldots,y_k)v_\text{col}(f_n) = \sum_{n = 1}^{\tilde{r}}s_n(y_1,\ldots,y_{k}) v_\text{col}(\tilde{f}_n).
$$
Thus we may simply define the functions $\tilde{g}_n$ by $\tilde{g}_n(y_1, \ldots, y_k) = s_n(y_1,\ldots,y_{k})$.

Properties \ref{R6} and \ref{R7} follow immediately from $\ref{R5}$.

\end{proof}


\section{Setup for the proof of theorem \ref{main}}

Fix integers $n$ and $m$ with $3 \le m \le n+2$. For any set $S \subset AG(n,q)$, define $G_m^S: S^m \to \bb{F}_q$ by
\begin{equation}\label{g}
G_{m}^S(x_1, \ldots, x_{m}) = \sum_{t_1, \dots, t_{m-1} \in \bb{F}_q} \prod_{j = 1}^n \left[1 - \left(\sum_{i = 1}^{m-1} t_i(x_{ij}-x_{mj}) \right)^{q-1}\right].
\end{equation}
where $x_{ij}$ is the $j^\text{th}$ coordinate of point $x_i$.

\vspace{.1 in}
\noindent
Notice that the bracketed expression is equal to $1$ if
$$
\begin{bmatrix} t_1 \\ t_2 \\ \vdots \\ t_{m-1} \end{bmatrix} \boldsymbol{\cdot} \begin{bmatrix} x_{1j} -x_{mj}\\ x_{2j}-x_{mj} \\ \vdots \\x_{(m-1)j}-x_{mj} \end{bmatrix} = 0
$$
and equal to $0$ otherwise. Thus $G_{m}^S(x_1, \ldots, x_{m})$ is equal to the number of elements, modulo $p$, in
$$
\text{null}\Big(\{x_i-x_{m}: 1 \le i \le m-1\}\Big).
$$
Since the size of a vector space over $\bb{F}_q$ must be a power of $q$, we see that $G_{m}^S(x_1, \ldots, x_{m})$ evaluates to $1$ if the vectors of $\{x_i - x_{m}: 1 \le i \le m-1\}$ are linearly independent, and $0$ otherwise.

Now suppose that the set $A$ is $k$-general. If $x_1, \ldots, x_{k}$ are points of $A$, then \\$\{x_i-x_{k}: 1 \le i \le k-1\}$ is a set of $k-1$ linearly independent vectors if and only if $x_1, \ldots, x_{k}$ are all distinct. Therefore, if we define a function $T_m^S: S^m \to \bb{F}_q$ for any set $S \subset AG(n,q)$ by
\begin{equation}\label{t}
T_{m}^S(x_1, \ldots, x_{m}) = \begin{cases} 1 & \text{all } x_j \text{ are distinct}\\
0 &	\text{ otherwise}
\end{cases},
\end{equation}
then $T_m^S = G_m^S$ when $S$ is $m$-general.

From here, the general idea is to follow the procedure of \cite{Tao}. We will divide our argument into three lemmas:

\begin{lemma}\label{matrices}
Let $S \subset AG(n,q)$ and $m \ge 2$. If $q$ is odd or $q$ and $m$ are both even, then
$$
\rank(T_m^S) \ge |S|-2m+3.
$$
\end{lemma}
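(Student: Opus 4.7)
The plan is to lower-bound $\rank_m(T_m^S)$ by expanding it over the set partition lattice of $[m]$, isolating a diagonal-type term of $m$-rank $|S|$, and controlling the error via property \ref{R1}.

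First I would expand
$$T_m^S = \prod_{1 \le i < j \le m}\bigl(1 - \1[x_i = x_j]\bigr) = \sum_\pi c_\pi\, \mathrm{Eq}_\pi,$$
where $\pi$ ranges over set partitions of $[m]$, $\mathrm{Eq}_\pi(x_1,\ldots,x_m) = \1[x_i = x_j \text{ whenever } i \sim_\pi j]$, and $c_\pi = \prod_B (-1)^{|B|-1}(|B|-1)!$ arises from M\"obius inversion on the partition lattice. Any $\mathrm{Eq}_\pi$ containing a singleton block $\{i\}$ does not depend on $x_i$ and therefore has $m$-rank at most $1$. The one-block partition contributes $c_{\{[m]\}}\,\Delta_m$, where $\Delta_m = \1[x_1 = \cdots = x_m]$ has $m$-rank exactly $|S|$ by Tao's slice-rank bound for diagonal tensors.

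Next I would identify a leading term that is nonzero in $\bb{F}_q$ and has $m$-rank $|S|$. When $c_{\{[m]\}} = (-1)^{m-1}(m-1)!$ is nonzero in $\bb{F}_q$, $c_{\{[m]\}}\Delta_m$ itself plays this role. Otherwise (for instance $m > p$ with $q$ odd, or any $m \ge 3$ with $q$ even), an alternative leading combination built from $\mathrm{Eq}_\pi$ for partitions with all block sizes $\ge 2$---in particular, the perfect-matching partitions when $m$ is even---must be used. The parity hypothesis of the lemma is exactly what guarantees such a leading term exists in $\bb{F}_q$. In the excluded case ``$q$ even, $m$ odd,'' no such term survives and the claim indeed fails: in characteristic $2$ a direct computation yields
$$T_3^S = T_2^S(x_1, x_2) + \1[x_1 = x_3] + \1[x_2 = x_3],$$
whose $m$-rank is at most $3$ regardless of $|S|$.

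The main obstacle is bounding the total $m$-rank of the remaining $\mathrm{Eq}_\pi$ terms by $2m - 3$. Singleton-block partitions each contribute at most $1$; the trickier contributions come from partitions with multiple blocks of size $\ge 2$, whose $\mathrm{Eq}_\pi$'s can individually have $m$-rank comparable to $|S|$. The hope is to collapse these into lower-dimensional $T_k^S$ factors by recursively applying $T_k^S = \sum_\pi c_\pi \mathrm{Eq}_\pi$ at smaller $k$, so that clusters of pair-delta and constant terms telescope into $T_k^S$-type pieces---each of $m$-rank $1$, since it depends on fewer than $m$ variables. Property \ref{R1} then gives $|S| = \rank_m(\text{leading}) \le \rank_m(T_m^S) + (2m - 3)$, establishing the claim.
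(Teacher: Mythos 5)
Your proposal takes a genuinely different route from the paper. You expand $T_m^S$ over the partition lattice of $[m]$ via M\"obius inversion and try to isolate a high-rank diagonal ``leading'' term, bounding the remainder with property \ref{R1}. The paper instead proves the lemma by induction on $m$: given a slice-rank decomposition of $T_{k+1}^B$, it multiplies by a carefully chosen function $\bar h(x_{k+1})$ drawn from an orthogonal complement, sums over $x_{k+1}$, and restricts to a sub-block $B''$ so that the left side collapses to a nonzero multiple of $T_k^{B''}$ (or $T_{k-1}^{B'}$ in the more delicate cases), allowing the inductive hypothesis to apply.

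There is, however, a genuine gap in your argument, and it is not cosmetic. The leading coefficient $c_{\{[m]\}}=(-1)^{m-1}(m-1)!$ vanishes in $\mathbb{F}_p$ as soon as $p\le m-1$. For example $m=4$, $q=3$ is covered by the lemma but kills your leading term; for even $q$ the leading coefficient vanishes for \emph{every} $m\ge 3$. You acknowledge this and gesture at an ``alternative leading combination built from partitions with all block sizes $\ge 2$,'' but you neither construct it nor show it has slice rank $|S|$, and this is not a minor detail: those $\mathrm{Eq}_\pi$ are \emph{not} diagonal tensors, so Tao's diagonal-rank lemma gives you nothing about them, and it is not clear their sum is high rank at all. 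Independently, the remainder is not as low-rank as you need. For $m\ge 4$ there are partitions with two or more non-singleton blocks (e.g.\ the three pair-pair partitions of $[4]$, or $3{+}2$ for $m=5$), and $\mathrm{Eq}_\pi$ for such $\pi$ genuinely depends on all $m$ variables with slice rank comparable to $|S|$; your ``telescoping into $T_k$-type pieces depending on fewer than $m$ variables'' does not apply, since $\1[x_1=x_2]\1[x_3=x_4]$ cannot be written as a function of fewer than four variables. Finally, even in the friendliest case ($m=3$, $p>2$) the natural bookkeeping gives $|S|-4$ rather than the lemma's $|S|-2m+3=|S|-3$, and for general $m$ the number of low-rank slices you need grows like $m$, not $2m-3$; the specific constant in the lemma is precisely what the paper's orthogonal-complement construction is engineered to achieve. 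Your characteristic-$2$ identity and the parity discussion are correct and match the paper's remark, but they do not repair the main argument.
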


\begin{lemma}\label{polynomial}
For any set $S \subset AG(n,q)$ and $m \ge 3$,
$$
\rank(G_m^S) \le m \cdot\min_{t \in (0,1)}\left(t^{-\frac{q-1}{m}} \cdot \frac{1-t^q}{1-t}\right)^n.
$$
\end{lemma}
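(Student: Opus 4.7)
The plan is to bound $\rank(G_m^S)$ by writing $G_m^S$ as a polynomial in the $mn$ coordinate variables $x_{ij}$ (for $i=1,\dots,m$ and $j=1,\dots,n$) and applying the Croot-Lev-Pach partition-by-degree trick with the $m$ points playing the role of the $m$ variable blocks.

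First I would check the degree. The bracketed expression $1-\bigl(\sum_{i=1}^{m-1} t_i(x_{ij}-x_{mj})\bigr)^{q-1}$ is a polynomial of total degree at most $q-1$ in the $m$ variables $x_{1j},\dots,x_{mj}$ (and of course each individual variable appears to degree at most $q-1$). The product over $j=1,\dots,n$ therefore has total degree at most $n(q-1)$ in the $mn$ variables, and summing over $t_1,\dots,t_{m-1}\in\bb{F}_q$ (which is $\bb{F}_q$-linear in the coefficients) preserves this degree bound. Hence I can expand
$$
G_m^S(x_1,\dots,x_m)=\sum_{\alpha} c_\alpha \prod_{i,j} x_{ij}^{\alpha_{ij}},
$$
where each $\alpha_{ij}\in\{0,\dots,q-1\}$ and $\sum_{i,j}\alpha_{ij}\le n(q-1)$.

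Next I would partition the monomials. For each multi-index $\alpha$, pigeonhole gives some index $i^\ast(\alpha)\in\{1,\dots,m\}$ with $\sum_{j}\alpha_{i^\ast,j}\le n(q-1)/m$. Grouping monomials by their chosen $i^\ast$, the contribution indexed by $i$ can be written as
$$
\sum_{\beta}\Bigl(\prod_j x_{ij}^{\beta_j}\Bigr)\, g_{i,\beta}(x_1,\dots,\hat{x}_i,\dots,x_m),
$$
where $\beta=(\beta_1,\dots,\beta_n)$ ranges over tuples in $\{0,\dots,q-1\}^n$ with $\sum_j \beta_j\le n(q-1)/m$ and $g_{i,\beta}$ collects the remaining factors. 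Since each summand is a rank-$1$ product of a function of $x_i$ with a function of the other $m-1$ points, property \ref{R1} (and \ref{R6}) gives $\rank(G_m^S)\le m\cdot N$, where $N$ is the number of such $\beta$'s.

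Finally I would bound $N$ with the standard generating-function trick: for any $t\in(0,1)$ and $D=n(q-1)/m$, the bound $\mathbf{1}_{\sum_j\beta_j\le D}\le t^{\sum_j\beta_j-D}$ yields
$$
N\le t^{-D}\prod_{j=1}^n\sum_{k=0}^{q-1}t^k =\Bigl(t^{-\tfrac{q-1}{m}}\cdot\tfrac{1-t^q}{1-t}\Bigr)^n,
$$
and minimizing over $t\in(0,1)$ gives the claimed inequality. No step here is a real obstacle; the only mildly delicate points are verifying the total-degree bound $n(q-1)$ (so that the pigeonhole threshold $n(q-1)/m$ is sharp) and making sure the $m$ partition classes combine via \ref{R1} without double-counting, both of which are essentially bookkeeping.
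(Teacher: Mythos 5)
Your proposal is correct and follows essentially the same route as the paper: expand $G_m^S$ as a polynomial with individual exponents at most $q-1$ and total degree at most $n(q-1)$, pigeonhole each monomial to a block $i$ of low partial degree, group by the $x_i$-factor, and bound the number of admissible exponent tuples with the saddle-point estimate $\sum_{i\le D}[x^i]f(x)\le t^{-D}f(t)$. The only cosmetic difference is that the paper tracks the floor $\lfloor (q-1)n/m\rfloor$ explicitly before relaxing it, but this changes nothing since $t\in(0,1)$.
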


\pagebreak

\begin{lemma}\label{asymptotic}
Fix an integer $m \geq 3$ and let
$$
h_{q}(x) = x^{-\frac{q-1}{m}} \cdot \frac{1-x^q}{1-x}.
$$
Then on $(0,1)$, $h_q(x)$ attains a minimum value of
$$
\frac{me^{1-\frac{\alpha}{m}}}{m^2-m\alpha+\alpha}q + O(1),
$$
where $\alpha$ is the unique value in $(0,1)$ satisfying $$ \alpha = \frac{m^2-m\alpha+\alpha}{e^{m - \alpha}}.$$
\end{lemma}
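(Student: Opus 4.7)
The plan is to reduce the minimization of $h_q(x)$ over $x \in (0,1)$ to a $q$-independent optimization via the substitution $y = x^q$, under which
\[
h_q(y^{1/q}) \;=\; y^{-(q-1)/(mq)} \cdot \frac{1-y}{1-y^{1/q}}.
\]
I would Taylor-expand both factors in $1/q$, using $1 - y^{1/q} = (-\log y)/q + O(1/q^2)$ and $y^{1/(mq)} = 1 + O(1/q)$, multiply out, and collect terms to obtain
\[
h_q(y^{1/q}) \;=\; q\,g(y) \;+\; O(1), \qquad g(y) := \frac{(1-y)\,y^{-1/m}}{-\log y},
\]
uniformly for $y$ in any compact subinterval of $(0,1)$ (the implicit constant depends on $m$ and the subinterval, but not on $q$).

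The second step is to minimize the $q$-free function $g$ on $(0,1)$. Setting $(\log g)'(y) = 0$ produces
\[
\frac{1}{y(-\log y)} \;=\; \frac{1}{1-y} + \frac{1}{my}.
\]
Writing $L := -\log y$ and clearing denominators turns this into $(m-L)\,e^L = mL + m - L$, and the further substitution $\alpha = m - L$ produces precisely $\alpha\,e^{m-\alpha} = m^2 - m\alpha + \alpha$. Uniqueness of $\alpha^* \in (0,1)$ follows because $F(\alpha) := \alpha e^{m-\alpha} - (m^2 - m\alpha + \alpha)$ has $F'(\alpha) = (1-\alpha)e^{m-\alpha} + (m-1) > 0$ on $(0,1)$, combined with $F(0) = -m^2 < 0$ and $F(1) = e^{m-1} - m^2 + m - 1 > 0$ for $m \ge 3$. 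That this critical point is actually the minimum of $g$ follows from $g(0^+) = +\infty$, $g(1^-) = 1$, and the uniqueness of the critical point. Substituting $e^L = (mL+m-L)/(m-L)$ back into $g(y^*)$ gives $1 - e^{-L} = mL/(mL+m-L)$, so
\[
g(y^*) \;=\; \frac{(1 - e^{-L})\,e^{L/m}}{L} \;=\; \frac{m\,e^{L/m}}{mL+m-L} \;=\; \frac{m\,e^{1-\alpha/m}}{m^2 - m\alpha + \alpha},
\]
which is the claimed leading-order minimum value.

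Combining the two displays yields $\min h_q = qg(y^*) + O(1)$, as required. The main obstacle I anticipate is controlling the error term uniformly down to where the minimum is attained: the constant in $h_q(y^{1/q}) = qg(y) + O(1)$ blows up as $y \to 0$, so one cannot naively pass from a pointwise expansion to a global lower bound on $\min h_q$. For the upper bound this is not an issue — just evaluate at $x = (y^*)^{1/q}$ — but the lower bound requires an additional step. A direct computation of the leading $O(1)$ correction in the expansion gives $y^{-1/m}(1-y)(\tfrac{1}{2} - \tfrac{1}{m})$, which is nonnegative precisely when $m \ge 3$; this provides a one-sided error estimate favorable for a lower bound on $h_q$. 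Combined with the crude estimate $h_q(x) \ge x^{-(q-1)/m}$, which is exponential in $q$ for $x$ bounded away from $1$, this confines the minimizer of $h_q$ for large $q$ to a neighborhood of $x = 1 - \Theta(1/q)$ — equivalently, $y = x^q$ in a compact subinterval of $(0,1)$ depending only on $m$ — on which the expansion is uniform and the matching lower bound $\min h_q \ge qg(y^*) - O(1)$ follows.
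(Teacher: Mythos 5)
Your proposal is correct in its main ideas but takes a genuinely different route from the paper. The paper works entirely in the original variable $x$: it proves $h_q$ is convex by decomposing it into convex summands, computes $h_q'(x) = \tfrac{x^{-(q-1)/m-1}}{m(1-x)^2} r_q(x)$ with $r_q$ an explicit polynomial, locates the critical point as $x_0 = \tfrac{q+\beta-1}{q+m-1}$ by checking signs of $h_q'$ at two explicit points, rearranges $r_q(x_0) = 0$ into $\beta = \tfrac{m^2-m\beta+\beta}{e^{m-\beta}}\bigl(1 - O(q^{-1})\bigr)$, and then applies the mean value theorem with an explicit derivative bound $0.25 < f' < 1$ to get $\beta = \alpha + O(q^{-1})$; the final asymptotic is read off from $r_q(x_0)=0$ via $\tfrac{1-x_0^q}{1-x_0} = \tfrac{qm}{m + (q-1)(m-1)(1-x_0)}$. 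You instead substitute $y = x^q$ and isolate a $q$-free optimization $\min_{(0,1)} g$ with $g(y) = (1-y)y^{-1/m}/(-\log y)$, which is conceptually cleaner: the defining equation for $\alpha$ emerges directly from $g'=0$, and your $L = m-\alpha$ corresponds in the paper's parameterization to $y^\ast = x_0^q \to e^{-(m-\alpha)}$. Both yield the same transcendental equation and the same leading value $\tfrac{m e^{1-\alpha/m}}{m^2-m\alpha+\alpha}$; the paper's approach is more self-contained because all the error bookkeeping is packaged into one MVT estimate on an explicit function, whereas yours makes the limiting structure transparent at the cost of a uniformity argument.

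The place to tighten is the lower-bound error control, which you correctly flag as the obstacle, but your two supporting remarks are not yet a proof. The sign of the formal correction $y^{-1/m}(1-y)\bigl(\tfrac12 - \tfrac1m\bigr)$ is nonnegative for all $m \ge 2$ (not ``precisely when $m\ge 3$''), and in any case the sign of the first dropped term does not by itself give a one-sided remainder bound; what does work is the exact identity
\[
\frac{h_q\bigl(y^{1/q}\bigr)}{q\,g(y)} \;=\; (1-t)^{1/m}\cdot\frac{-\log(1-t)}{t}, \qquad t := 1 - y^{1/q},
\]
from which you can extract genuine inequalities on the relevant $t$-range. Also, the crude bound $h_q(x) \ge x^{-(q-1)/m}$ only forces the minimizer to satisfy $x \ge 1 - O\bigl(\tfrac{\log q}{q}\bigr)$, i.e.\ $y = x^q \gtrsim q^{-O(1)}$; that does not confine $y$ to a $q$-independent compact subinterval as your last sentence asserts. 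You either need to retain the geometric-sum factor (which gives $h_q(x) \gtrsim q\,g(x^q)$ up to controlled multiplicative error for all $x$), or argue directly that $g(y) \to \infty$ fast enough as $y \to 0^+$ to dominate whatever non-uniformity arises there. None of this changes the conclusion, but it is exactly the bookkeeping that the paper's convexity-plus-MVT route packages up and discharges in one step.
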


\vspace{.2 in}
When $A \subset AG(n,q)$ is $m$-general (and $q$ is even or $m$ is odd), combining lemmas \ref{matrices} and \ref{polynomial} gives us
$$
|A|-2m+3 \leq \rank(T_m^A) = \rank(G_m^A) \leq m \cdot\min_{t \in (0,1)}\left(t^{-\frac{q-1}{m}} \cdot \frac{1-t^q}{1-t}\right)^n,
$$
and therefore
$$
M_{m-1}(n,q) \le 2m + m \cdot \min_{t \in (0,1)}\left(t^{-\frac{q-1}{m}} \cdot \frac{1-t^q}{1-t}\right)^n.
$$
In lemma \ref{asymptotic}, we verify that $\ds \min_{t \in (0,1)}\left(t^{-\frac{q-1}{m}} \cdot \frac{1-t^q}{1-t}\right)$ is well-defined and bounded above by
$$
\frac{me^{1-\frac{\alpha}{m}}}{m^2-\alpha m+\alpha}q + O(1),
$$
completing the proof of theorem \ref{main}.

\vspace{.1 in}
\begin{remark} In lemma \ref{matrices}, we see that the rank of $T_m^S$ is typically around $|S|$, but this surprisingly does not hold when $p=2$ and $m$ is odd, hence the omission of that case. Indeed, in characteristic $2$ it is easy to verify that
$$
T_{2k+1}^S(x_1, x_2, \ldots, x_{2k+1}) = \sum_{i = 1}^{2k+1} T_{2k}^S(x_1, \ldots, x_{i-1},x_{i+1}, \ldots, x_{2k+1})
$$
and thus $\rank(T_{2k+1}^S) \le 2k+1$.
\end{remark}
\vspace{.1 in}


\section{Proof of lemma \ref{matrices}}

We proceed by induction on $m$ and begin with the case $m = 2$. Enumerate $B = \{b_1, \ldots, b_{|B|}\}$ and let $M$ be the matrix with $ m_{ij} = T_2^B(b_i,b_j)$. By the definition of $T_2^B$, $M$ is the matrix which has zeros along the diagonal and ones everywhere else. Thus $M$ has rank at least $|B|-1$, and by claim \ref{R4}, $\rank(T_2^B) \geq |B|-1$. (Note: The matrix $M$ has rank $|B|$ unless $|B| \equiv 1 \bmod p$, when the rank is $|B|-1$.)

\vspace{.2 in}

We will first consider the case where $q$ is odd. Fix an integer $k \ge 2$ and assume that for any $S \subset AG(n,q)$, $\rank_j(T_j^S) \ge |S|-2j+3$ when $2 \le j \le k$. Fix $B \subset AG(n,q)$ and let $r$ be the $(k+1)$-rank of $T_{k+1}^B$. Then there are functions $f_{i,\alpha}: B \to \bb{F}_q$, $g_{i,\alpha}: B^{k} \to \bb{F}_q$ so that
\begin{equation}\label{rankeq}
T_{k+1}^B(x_1, \ldots, x_{k+1}) = \sum_{i = 1}^{k+1} \sum_{\alpha \in I_i} f_{i,\alpha}(x_i)g_{i,\alpha}(x_1, \ldots, x_{i-1},x_{i+1}, \ldots, x_{k+1})
\end{equation}
where the indexing sets $I_i$ are disjoint and $\sum_{i=1}^{k+1} |I_i| = r$. Let $\1_B:B \to \bb{F}_q$ be the function which is identically $1$ on $B$.

In most situations, we can prove that $\rank(T_{k+1}^S) \ge |S|-2(k+1)+3$ by assuming that $\rank(T_{k}^S) \ge |S|-2k+3$. This is demonstrated in Case 1. However, we run into a hiccup when $k$ happens to be divisible by $p$. To get around this, we instead appeal to the assumption that $\rank(T_{k-1}^S) \ge |S|-2(k-1)+3$, which we take care of in Case 2.

\vspace{.2 in}

\noindent
\textbf{\underline{Case 1}}: $p \centernot\mid k$, or $ p \mid k$ and $\ds \1_B \notin \bigcap_{i=1}^{k+1}\SPAN\Big(\{f_{i,\alpha}: \alpha \in I_{i} \}\Big)$

If $p \centernot\mid k$, let
$$
H = \SPAN\Big(\{\1_B\} \cup \{f_{k+1,\alpha}: \alpha \in I_{k+1}\}  \Big).
$$
Otherwise, since $T_{k+1}^B$ is symmetric in all variables, we may assume without loss of generality that\\ $\1_B \notin \SPAN\Big(\{f_{k+1,\alpha}: \alpha \in I_{k+1} \}\Big)$ and let
$$
H = \SPAN\Big(\{f_{k+1,\alpha}: \alpha \in I_{k+1}\}  \Big).
$$
In either case, let $H^\perp$ be the orthogonal complement of $H$ with respect to the usual inner product.

\vspace{.1 in}
Because the dimension of $H$ is at most $|I_{k+1}|+1$, the dimension $d$ of $H^\perp$ is at least $|B|-|I_{k+1}|-1$. Find a set $B' \subset B$ and an appropriate basis $\mathcal{U} = \{h_{1}, h_{2}, \ldots, h_{d}\}$ for $H^\perp$ so that $|B'| = d$ and
$$
\begin{bmatrix} v_\text{col}\left(h_{1}\big|_{B'}\right) & v_\text{col}\left(h_{2}\big|_{B'}\right) & \cdots & v_\text{col}\left(h_{d}\big|_{B'}\right)\\[3pt] \end{bmatrix} = 
\begin{bmatrix} 0 & 1 & 1 & \cdots & 1 & 1 \\
								1 & 0 & 1 & \cdots & 1 & 1 \\
								1 & 1 & 0 & \cdots & 1 & 1 \\
								\vdots & \vdots & \vdots & \ddots & \vdots & \vdots\\
								1 & 1 & 1 & \cdots & 0 & 1 \\
								1 & 1 & 1 & \cdots & 1 & 1 \\ 
\end{bmatrix}
$$
(see definition \ref{vrow}). If $p \centernot\mid k$, we simply let $\bar{h} = h_d$. Otherwise, since $\1_B \notin H$, there must be a function $\bar{h} \in \mathcal{U}$ so that $\bar{h}$ is not orthogonal to $\1_B$, i.e. $\sum_{b \in B}\bar{h}(b) \neq 0$.

\vspace{.1 in}

Multiplying both sides of \eqref{rankeq} by $\bar{h}(x_{k+1})$ and summing over $x_{k+1} \in B$, the right side becomes
\begin{equation}\label{right}
\sum_{i = 1}^{k} \sum_{\alpha \in I_i} \left(f_{i,\alpha}(x_i) \sum_{x_{k+1} \in B} \bar{h}(x_{k+1})g_{i,\alpha}(x_1, \ldots, x_{i-1},x_{i+1}, \ldots, x_{k+1})\right),
\end{equation}
which has rank at most $r -|I_{k+1}|$.

\vspace{.1 in}
\noindent
On the left side we get
\begin{align}
&\sum_{x_{k+1}\in B}\bar{h}(x_{k+1})T_{k+1}^B(x_1, \ldots, x_{k+1}) \nonumber\\
= \quad&T_{k}^B(x_1, \ldots, x_k) \sum_{\mathclap{{x \in B \backslash\{x_1, \ldots, x_k\}}}}\bar{h}(x) \nonumber\\
= \quad &T_{k}^B(x_1, \ldots, x_{k})\left(\sum_{x \in B} \bar{h}(x) - \sum_{i=1}^k \bar{h}(x_i)\right).\label{left}
\end{align}

Let $B''= \{b \in B: \bar{h}(b) = 1\}$ and notice that $|B''| \ge d-1$. Restrict the domain of both \eqref{right} and \eqref{left} to $(B'')^k$. By \ref{R2}, the rank of \eqref{right} is still no more than $r -|I_{k+1}|$. Note that the second sum in \eqref{left} simplifies to $k$ since $\bar{h}\big|_{B''} \equiv 1$. If $p\centernot\mid k$, then the first sum is $0$ since $\1_B \in H$. If $p \mid k$, then the first sum is some nonzero constant by our construction of $\bar{h}$. In either case, we are left with $cT_{k}^{B''}(x_1, \ldots, x_{k})$ for some $c \neq 0$, and $\rank_k(cT_k^{B''}) \geq |B''|-2k+3$ by the inductive hypothesis. Comparing the ranks of \eqref{right} and \eqref{left}, we see
$$
r - |I_{k+1}| \geq |B''|-2k+3  \geq |B|-|I_{k+1}|-2k+1
$$
and thus
$$
\rank(T_{k+1}^B) = r \geq |B|-2(k+1)+3.
$$
\noindent
\textbf{\underline{Case 2}}: $p \mid k$ and $\ds \1_B \in \bigcap_{i=1}^{k+1}\SPAN\Big(\{f_{i,\alpha}: \alpha \in I_{i} \}\Big)$.

Notice that $k+1 \geq 4$, $T_{k+1}^B$ is symmetric, and $r \le |B|$ by \ref{R3}. Therefore, we may assume without loss of generality that $|I_k| + |I_{k+1}|  < |B|$. For $i = k, k+1$, let
$$
H_{i} = \SPAN\Big(\{f_{i,\alpha}: \alpha \in I_{i}\}  \Big)
$$
and let $H_i^\perp$ be the orthogonal complement.

\vspace{.1 in}
Because the dimension of $H_i$ is $|I_i|$ (by \ref{R7}), the dimension $d_i$ of $H_i^\perp$ is $|B|-|I_{i}|$. Find a set $B_i \subset B$ and an appropriate basis $\mathcal{U}_i = \{h_{i,1}, h_{i,2}, \ldots, h_{i,d_i}\}$ for $H_i^\perp$ so that $|B_i| = d_i$ and
$$
\begin{bmatrix} v_\text{col}\left(h_{i,1}\big|_{B_i}\right) & v_\text{col}\left(h_{i,2}\big|_{B_i}\right) & \cdots & v_\text{col}\left(h_{i,d_i}\big|_{B_i}\right)\\[3pt] \end{bmatrix} = 
\begin{bmatrix} 0 & 1 & 1 & \cdots & 1 & 1 \\
								1 & 0 & 1 & \cdots & 1 & 1 \\
								1 & 1 & 0 & \cdots & 1 & 1 \\
								\vdots & \vdots & \vdots & \ddots & \vdots & \vdots\\
								1 & 1 & 1 & \cdots & 0 & 1 \\
								1 & 1 & 1 & \cdots & 1 & 1 \\ 
\end{bmatrix}
$$
Since $d_k + d_{k+1} = 2|B| - |I_k| - |I_{k+1}| > |B|$, there must be some $\bar{h}_k \in \mathcal{U}_k$ and $\bar{h}_{k+1} \in \mathcal{U}_{k+1}$ so that $\bar{h}_k$ and $\bar{h}_{k+1}$ are non-orthogonal.

Multiplying both sides of \eqref{rankeq} by $\bar{h}_k(x_{k})\bar{h}_{k+1}(x_{k+1})$ and summing over all $x_k,x_{k+1} \in B$, the right side becomes
\begin{equation}\label{right2}
\sum_{i = 1}^{k-1} \sum_{\alpha \in I_i} \left(f_{i,\alpha}(x_i) \sum_{x_k,x_{k+1} \in B} \bar{h}_k(x_k)\bar{h}_{k+1}(x_{k+1})g_{i,\alpha}(x_1, \ldots, x_{i-1},x_{i+1}, \ldots, x_{k+1})\right),
\end{equation}
which has rank at most $r -|I_{k+1}|-|I_k|$.

Meanwhile, the left side simplifies to
$$
T_{k-1}^B(x_1, \ldots, x_{k-1}) \sum_{\mathclap{\substack{x,y \in B \backslash\{x_1, \ldots, x_{k-1}\}\\ x \neq y}}}\bar{h}_k(x)\bar{h}_{k+1}(y).
$$
Abbreviating $\{x_1, x_2, \ldots, x_{k-1}\}$ as $\mathcal{X}$ and expanding,
\begin{align*}
T_{k-1}^B(x_1, \ldots, x_{k-1})&\left(\sum_{x, y \in B \backslash \mathcal{X}} \bar{h}_k(x)\bar{h}_{k+1}(y) - \sum_{x \in B \backslash \mathcal{X}} \bar{h}_k(x)\bar{h}_{k+1}(x)\right)\\
= T_{k-1}^B(x_1, \ldots, x_{k-1}) &\Bigg(\sum_{x \in B \backslash \mathcal{X}} \bar{h}_k(x)\sum_{y \in B}\bar{h}_{k+1}(y)- \sum_{y \in \mathcal{X}} \bar{h}_{k+1}(y)\sum_{x \in B}\bar{h}_{k}(x)\\
& + \sum_{x, y \in  \mathcal{X}} \bar{h}_k(x)\bar{h}_{k+1}(y) - \sum_{x \in B \backslash \mathcal{X}} \bar{h}_k(x)\bar{h}_{k+1}(x)\Bigg).
\end{align*}
Since $\1_B \in H_k \cap H_{k+1}$, the first two terms disappear, leaving
$$
T_{k-1}^B(x_1, \ldots, x_{k-1})\left[ \left(\sum_{x \in \mathcal{X}} \bar{h}_k(x)\right)\left(\sum_{y \in \mathcal{X}}\bar{h}_{k+1}(y)\right) - \sum_{x \in B} \bar{h}_k(x)\bar{h}_{k+1}(x) + \sum_{i = 1}^{k-1} \bar{h}_k(x_i)\bar{h}_{k+1}(x_i)\right].
$$
Since $\bar{h}_k$ and $\bar{h}_{k+1}$ are not orthogonal, we have
\begin{equation}\label{left2}
T_{k-1}^B(x_1, \ldots, x_{k-1})\left[ \left(\sum_{i = 1}^{k-1} \bar{h}_k(x_i)\right)\left(\sum_{i=1}^{k-1}\bar{h}_{k+1}(x_i)\right)-c + \sum_{i = 1}^{k-1} \bar{h}_k(x_i)\bar{h}_{k+1}(x_i)\right] 
\end{equation}
for some $c \neq 0$.

Let $B' = \{x \in B: \bar{h}_{k+1}(x) = \bar{h}_k(x) = 1\}$. By our constructions of $\bar{h}_{k+1}$ and $\bar{h}_k$,
$$
|B'| \geq d_k-1 + d_{k+1}-1 - |B| = |B| - |I_k| - |I_{k+1}| - 2.
$$
Restrict the domains of both \eqref{left2} and \eqref{right2} to $(B')^k$. Since $p \mid k$, expression \eqref{left2} simplifies to
$$
T_{k-1}^{B'}(x_1, \ldots, x_{k-1})\big((k-1)^2 -c+ (k-1)\big) = -cT_{k-1}^{B'}(x_1, \ldots, x_{k-1}),
$$
a function whose $(k-1)$-rank is at least $|B'|-2k+5$ by our inductive hypothesis. Comparing the ranks of \eqref{right2} and \eqref{left2}, we see
$$
r - |I_{k+1}|-|I_k| \geq |B'|-2k+5  \geq |B|-|I_{k}|- |I_{k+1}|-2k+3
$$
and thus
$$
\rank(T_{k+1}^B) = r \geq |B|-2(k+1)+5\geq |B|-2(k+1)+3.
$$
This completes the induction for odd $q$.

\pagebreak

The case of even $q$ is very similar to Case 2, as the induction will again take steps of size 2, iterating only through even values of $m$. However, we will need to take some extra steps to ensure that $H_i$ contains $\1_B$, which was already assumed in Case 2. (Recall that we will not be obtaining a result for even $q$ when $m$ is odd.)

Let $k\geq 3$ be odd and assume that $\rank(T_{k-1}^S) \geq |S|-2(k-1)+3$. Notice that when $|B| \le 5$, the desired result
$$
\rank(T_{k+1}^B) \geq |B|-2(k+1)+3
$$
is trivial, and therefore we may assume $|B| > 5$. Using $|B| > 5$, $k+1 \ge 4$ and the fact that $T_{k+1}^B$ is symmetric, we may assume that $|I_k| + |I_{k+1}| \le |B|/2 < |B|-2$.

Let $H_i = \SPAN\Big(\{\1_B\} \cup \{f_{i,\alpha}: \alpha \in I_{i}\}  \Big)$. This time, we only know that the dimension $d_i$ of $H_i^\perp$ is at least $|B|-|I_i|-1$, but we still have
$$
d_k + d_{k+1} \ge 2|B| - |I_k| - |I_{k+1}| -2> |B|.
$$
We construct $B_i$, $\mathcal{U}_i$, $\bar{h}_i$, and $B'$ as before. Again, we multiply both sides of \eqref{rankeq} by $\bar{h}_k(x_{k})\bar{h}_{k+1}(x_{k+1})$, sum over all $x_k,x_{k+1} \in B$, and restrict to $(B')^k$ to get
\begin{align*}
cT_{k-1}^{B'}(x_1, \ldots,x_{k-1})& = \\
\sum_{i = 1}^{k-1} &\sum_{\alpha \in I_i} \left(f_{i,\alpha}(x_i) \sum_{x_k,x_{k+1} \in B} \bar{h}_k(x_k)\bar{h}_{k+1}(x_{k+1})g_{i,\alpha}(x_1, \ldots, x_{i-1},x_{i+1}, \ldots, x_{k+1})\right) 
\end{align*}
for some $c \neq 0$. However, in this case,
$$
|B'| \geq d_k-1 + d_{k+1}-1 - |B| \ge |B| - |I_k| - |I_{k+1}| - 4.
$$
Nevertheless, comparing the ranks of both sides of the equation still yields
$$
\rank(T_{k+1}^B) = r \geq  |B|-2(k+1)+3,
$$
completing the induction.


\section{Proof of lemma \ref{polynomial}}

This proof uses the same general procedure that can be found in \cite{ElGi}. However, we are dealing with a more general case and will present all of the necessary details here. Looking back at equation \eqref{g}, we see $G_m^S$ is a polynomial in $mn$ $\bb{F}_q$-valued variables $x_{ij}$. Let $P$ be the set of monomials appearing in the expansion of $G_m^S$. Each monomial $\rho \in P$ can be written as
$$
\scalebox{1.2}{$\displaystyle\rho(x_1, \ldots, x_m) = c\prod_{i =1}^m\prod_{j=1}^n x_{ij}^{e_{ij}}$},
$$
where the coefficient $c \in \bb{F}_q$ and the $e_{ij} \in \bb{N}$ depend on $\rho$. (For convenience, we will consider $0$ an element of $\bb{N}$.)

By \eqref{g}, each $e_{ij}$ is no greater than $q-1$ and
$$
\sum_{i = 1}^m \sum_{j=1}^n e_{ij} \leq (q-1)n.
$$
Thus, there must be some index $i$ for which $\displaystyle \sum_{j=1}^n e_{ij} \leq \frac{(q-1)n}m$.  For each $\rho \in P$, choose such an index and call it $\kappa = \kappa(\rho)$. We then separate out the $\kappa$-factors of $\rho$:
$$
\scalebox{1.2}{$\displaystyle\rho(x_1, \ldots, x_m) = c\prod_{j =1}^n x_{\kappa j}^{e_{\kappa j}}\prod_{i \neq \kappa}\prod_{j=1}^n x_{ij}^{e_{ij}}$}.
$$
Letting $\displaystyle f_\rho(x_{\kappa}) = \prod_{j =1}^n x_{\kappa j}^{e_{\kappa j}}$ and $\displaystyle g_\rho(x_1, \ldots, x_{\kappa -1}, x_{\kappa +1}, \ldots, x_m) = c\prod_{i \neq \kappa}\prod_{j=1}^n x_{ij}^{e_{ij}}$, we have
$$
G_m^S(x_1, \ldots, x_m) = \sum_{i=1}^m \sum_{\substack{\rho \in P \\ \kappa(\rho) = i}} f_\rho(x_i)g_\rho(x_1, \ldots, x_{i -1}, x_{i +1}, \ldots, x_m).
$$
Next, group together the polynomials with matching ``$\kappa$-factors," i.e. for $e = (e_1,e_2, \ldots, e_n) \in \bb{N}^n$,
$$
M_i(e) = \left\{\rho \in P: \kappa(\rho) = i,  f_\rho(x_i) = \prod_{j=1}^n x_{ij}^{e_j}\right\}.
$$
We then reorganize the sum:
$$
G_m^S(x_1, \ldots, x_m) = \sum_{i=1}^m \sum_{e\in \bb{N}^n} \left[\left(\prod_{j = 1}^n x_{ij}^{e_j}\right) \sum_{\rho \in M_i} g_\rho(x_1, \ldots, x_{i -1}, x_{i +1}, \ldots, x_m)\right].
$$
Notice that the expression in square brackets is a function of rank $1$. Therefore, by \ref{R1}, the rank of $G_m^S$ is bounded above by
$$
m\cdot \max_{1 \leq i \leq m}\#\{e \in \bb{N}^n: M_i(e) \neq \emptyset\}.
$$
As we observed earlier, $M_i(e)$ is empty unless  $e_j \leq q-1$ for all $j$ and $\sum_{j = 1}^n e_j \leq \frac{(q-1)n}m$. Thus the rank of $G_m^S$ is bounded above by the number of $n$-tuples in $\bb{N}^n$ in which each coordinate is no greater than $q-1$ and the sum of the coordinates is no greater than $ \frac{(q-1)n}m$.



\vspace{.1 in}

For $\alpha, \beta, \gamma \in \bb{N}$, let $\Lambda \left(\alpha,\beta,\gamma \right)$ be the number of $\alpha$-tuples of elements in $\{0,1,2, \ldots, \beta\}$ with sum no greater than $\gamma$. It is easy to verify that the number of $\alpha$-tuples with sum \textit{equal} to $i$ is $ \ds [x^i]\left(\frac{1-x^{\beta+1}}{1-x}\right)^\alpha$
and therefore
$$
\Lambda \left(\alpha,\beta,\gamma \right) = \sum_{i = 0}^{\gamma}[x^i]\left(\frac{1-x^{\beta+1}}{1-x}\right)^\alpha.
$$
We can derive a slight variation on the familiar saddle point bound: suppose that $f(x) = \sum_{i = 0}^{\infty} c_ix^i$ on $(0,1)$ and each $c_i$ is a non-negative real. Then for any non-negative integer $N$ and any $t \in (0,1)$, we have
$$
\sum_{i=0}^{N} [x^i]f(x) = \sum_{i = 0}^{N} c_i \leq \sum_{i = 0}^{\infty} c_i t^{i-N} = t^{-N}f(t).
$$
Therefore
$$
\Lambda \left(\alpha,\beta,\gamma \right) \le t^{-\gamma} \left( \frac{1-t^{\beta+1}}{1-t}\right)^\alpha
$$
for all $t \in (0,1)$. Applying this to the problem at hand,
\begin{align*}
\rank(G_m^S) \leq m \cdot \Lambda\left(n, q-1, \left\lfloor\frac{(q-1)n}{m}\right\rfloor\right)
&\leq m \cdot \min_{t \in (0,1)} \left(t^{-\lfloor\frac{(q-1)n}{m}\rfloor}\left(\frac{1-t^q}{1-t}\right)^n\right)\\
&\leq m \cdot \min_{t \in (0,1)}\left(t^{-\frac{q-1}{m}} \cdot \frac{1-t^q}{1-t}\right)^n.
\end{align*}


\section{Proof of lemma \ref{asymptotic}}

To verify that the minimum at $x_0$ is well-defined, let $s = \frac{q-1}{m}$ and write
$$
h_q(x) = \left(\sum_{i = 0}^{\lfloor{s}\rfloor-1} x^{i-s} \right)+ \big(x^{\lfloor{s}\rfloor-s} + x^{\lfloor s\rfloor+1-s}\big) + \left(\sum_{i = \lfloor{s}\rfloor+2}^{q-1} x^{i-s}\right).
$$
Each function in the sum (counting $x^{\lfloor{s}\rfloor-s} + x^{\lfloor s\rfloor+1-s}$ as a single function) is convex. Therefore $h_q$ is also convex, meaning that anywhere its derivative vanishes on $(0,1)$ must be the unique minimum on that interval. Taking the derivative, we find
$$
h_q'(x) = \frac{x^{-\frac{q-1}{m}-1}}{m(1-x)^2} \cdot r_q(x)
$$
where
\begin{equation}\label{rdef}
r_q(x) = (q+m-1)x - (q-1) -x^q\big((q-1)(m-1)(1-x) + m\big).
\end{equation}
Given that
\begin{alignat*}{3}
&h_q'\left(\frac{q-1}{q+m-1}\right) &&= - \frac{q(q+m-1)}{m}\left(\frac{q-1}{q+m-1}\right)^{q-\frac{q-1}{m}-1} &&< 0\\
&h_q'(1) &&= \frac{q(q-1)(m-2)}{2m} &&> 0,
\end{alignat*}
there must indeed be a unique minimum occurring at some value $x_0$, and moreover, \\$\ds x_0 = \frac{q+\beta-1}{q+m-1}$ for some $\beta \in (0, m)$.


\vspace{.2 in}
To get a better estimate for $\beta$, notice that
\begin{align*}
0 = r_q\left(\frac{q+\beta-1}{q+m-1}\right) 
 &= \beta - \left(1 +\frac{m-\beta}{q+\beta-1}\right)^{-q}\left[m^2 +\beta - m\beta - \frac{m(m-1)(m-\beta)}{q+m-1}\right] \\
& = \beta - \frac{m^2 - m\beta + \beta}{e^{m - \beta}}\big(1 - O(q^{-1})\big).
\end{align*}
Let $\ds f(x) = x - \frac{m^2 - mx+x}{e^{m - x}}$. We leave it to the reader to check that
\begin{itemize}
\item $f(x)$ has exactly one zero in $(0,1)$
\item $.25 < f'(x) < 1$ on $(0,1)$.
\end{itemize}
If $\alpha$ is that unique zero, then $f(\beta) = O(q^{-1})$ and $f(\alpha) = 0$, giving us
$$
\frac{f(\alpha)-f(\beta)}{\alpha-\beta} = \frac{O(q^{-1})}{\alpha-\beta}.
$$
Using the mean value theorem along with $.25 < f'(x) < 1$, we conclude that $\alpha = \beta + O(q^{-1})$. Therefore
$$
x_{0} = \frac{q+\beta-1}{q+m-1} = \frac{q+\alpha-1}{q+m-1} + O(q^{-2}).
$$ 
\vspace{.1 in}
To finish, we will estimate
$$
h_q\left(\frac{q+\alpha-1}{q+m-1} + O(q^{-2})\right).
$$
We can simplify this computation by rearranging the equation $r_q(x_0) = 0$ to get
$$
\frac{1-x_0^q}{1-x_0} = \frac{qm}{m + (q-1)(m-1)(1-x_0)},
$$
and thus
\begin{align*}
h_q\left(\frac{q+\alpha-1}{q+m-1} + O(q^{-2})\right) &= \left(\frac{q+\alpha-1}{q+m-1} + O(q^{-2})\right)^{-\frac{q-1}{m}} \cdot \frac{qm}{m + (q-1)(m-1)\left(1-\frac{q+\alpha-1}{q+m-1} - O(q^{-2})\right)}\\
& = \left(1 - \frac{m-\alpha-O(q^{-1})}{q+m-1}\right)^{-\frac{q-1}{m}} \cdot \frac{qm(q+m-1)(1 + O(q^{-2}))}{m(q+m-1) + (q-1)(m-1)(m-\alpha)}\\
& = e^{1 - \frac{\alpha}{m}}(1 + O(q^{-1})) \cdot \frac{qm(q+m-1)(1 + O(q^{-2}))}{m^2q - \alpha(m-1)(q-1)} \\ &= e^{1 - \frac{\alpha}{m}} \cdot \frac{mq}{m^2 - m\alpha+\alpha} + O(1).
\end{align*}


\pagebreak

\section{Estimating the size of $m$-general sets for certain $q$ and $m$}

Inequality \eqref{mubound} allows us to estimate $\mu_m(q)$ for large values of $q$. Table \ref{tablea} gives the asymptotic values for some small values of $m$. 
These asymptotic estimates are useful when $q$ is a fixed large number, but we can compute the exact values of $\ds \min_{t \in (0,1)}\left(t^{-\frac{q-1}{m}} \cdot \frac{1-t^q}{1-t}\right)$ when $q$ is small. For instance, if $q= m = 3$, we can solve $r_3(x_0) = 0$ (see equation \eqref{rdef}) to get $x_0 = \frac{\sqrt{33}-1}{8}$. Theorem \ref{main} then recovers the same result as \cite{ElGi}, namely that a maximal cap in $AG(n,3)$ has size bounded above by
$$
6 + 3 \cdot (h_3(x_0))^n = O(2.756^n),
$$
or $\mu(3) < 0.923$.

Another particularly interesting case is $q = 2$, $m=4$, since $2$-flats in $AG(n,2)$ have exactly $4$ points. We find that the largest set $A \subset AG(n,2)$ in which no $2$-flat is ``fully covered" by points of $A$ has $M_3(n,2) < 8 + 4(1.755)^n$ points, hence $\mu_4(2) <  0.813$.

Table \ref{tableb} shows the upper bounds for $\mu_m(q)$ given by a direct calculation of
$$
\log_q\left(\min_{t \in (0,1)}\left(t^{-\frac{q-1}{m}} \cdot \frac{1-t^q}{1-t}\right)\right).
$$
Note that some boxes are unfilled because we did not obtain estimates in the cases where $q$ is even and $m$ is odd. The values marked with a `$*$' are those which can be improved to $0.5$ using the result from \cite{HTW}. Those marked with a `$\dagger$' were already derived in \cite{ElGi}.

\begin{center}

\begin{table}[!htb]
\captionsetup{font=normalsize}
\caption{Upper bounds on $\mu_m(q)$}
\begin{subtable}{1.5in}
\centering
\begin{tabular}{|c|c|}
\hline
$m$ & $\mu_m(q) < \ldots$\\ 

 \hline

 $3$  & $1 - \log_q(1.188) $\\
 $4$ & $1 - \log_q(1.504)$\\
 $5$  & $1 - \log_q(1.853)$\\
$6$ & $1 - \log_q(2.212)$\\
 $7$ &  $1 - \log_q(2.577)$\\
	 $8$  &  $1 - \log_q(2.944)$\\
  \hline
\end{tabular}
\caption{Bounds for small $m$ and sufficiently large $q$}\label{tablea}
\end{subtable}
\begin{subtable}{5in}
\vspace{-.29 in}
\centering
\begin{tabular}{c|c|c|c |c| c |c|c|c|c|}
\multicolumn{2}{c}{} & \multicolumn{8}{c}{$q$}\\
 \cline{3-10}
  \multicolumn{2}{c|}{}& 2 & 3 & 4 & 5 & 7 & 8 & 9 & 11 \\

 \cline{2-10}

\multirow{6}{*}{$m$}
 &$3$ &    N/A  & $0.923^\dagger$ &         &$0.930^\dagger$ & $0.935^\dagger$ &         & $0.938^\dagger$ & $0.941^\dagger$\\ 
& $4$ & $0.813$ & $0.821^*$ & $0.829$ &$0.836$ & $0.846$ & $0.851$ & $0.854$ & $0.861$\\
 &$5$ &         & $0.735^*$ &         &$0.756$ & $0.771$ &         & $0.782$ & $0.791$\\
&$6$  & $0.651$ & $0.665^*$ & $0.679$ &$0.690$ & $0.708$ & $0.716$ & $0.722$ & $0.734$\\
 &$7$ &         & $0.609^*$ &         &$0.636$ & $0.657$ &         & $0.673$ & $0.685$\\ 
 &$8$ & $0.544$ & $0.562^*$ & $0.577$ &$0.591$ & $0.613$ & $0.622$ & $0.631$ & $0.644$\\ 
	\cline{2-10}
\end{tabular}
\caption{Bounds for specific small $m$ and $q$}\label{tableb}
\end{subtable}
\end{table}

\end{center}

\pagebreak


\section*{Acknowledgments}
I would like to thank the anonymous reviewers for their many helpful suggestions.

\end{document}